\numberwithin{equation}{section}
\newcommand{\be}{\begin{equation}}
\newcommand{\ee}{\end{equation}}
\newcommand{\ba}{\begin{array}}
\newcommand{\ea}{\end{array}}
\newcommand{\bea}{\begin{eqnarray*}}
\newcommand{\eea}{\end{eqnarray*}}
\newcommand{\bean}{\begin{eqnarray}}
\newcommand{\eean}{\end{eqnarray}}
\newtheorem{theorem}{Theorem}[section]
\newtheorem{lemma}{Lemma}[section]
\newtheorem{remark}{Remark}[section]
\newtheorem{corollary}{Corollary}[section]
\newcommand{\lc}{\mathrel{\raise2pt\hbox{${\mathop<\limits_{\raise1pt\hbox{\mbox{$\sim$}}}}$}}}
\newcommand{\gc}{\mathrel{\raise2pt\hbox{${\mathop>\limits_{\raise1pt\hbox{\mbox{$\sim$}}}}$}}}
\newcommand{\ec}{\mathrel{\raise1pt\hbox{${\mathop=\limits_{\raise2pt\hbox{\mbox{$\sim$}}}}$}}}
\begin{document}
%\linenumbers

\title{How to Define Dissipation-Preserving Energy for Time-Fractional Phase-Field Equations}

\author[1]{Chaoyu Quan}
\author[2,1]{Tao Tang}
\author[3,1]{Jiang Yang}
\affil[1]{\small SUSTech International Center for Mathematics, Southern University of Science and Technology, Shenzhen, China (\href{mailto:quancy@sustech.edu.cn}{quancy@sustech.edu.cn})}
\affil[2]{\small Division of Science and Technology, BNU-HKBU United International College, Zhuhai, Guangdong, China (\href{mailto:tangt@sustech.edu.cn}{tangt@sustech.edu.cn}).}
\affil[3]{\small Department of Mathematics, Southern University of Science and Technology, Shenzhen, China  (\href{mailto:yangj7@sustech.edu.cn}{yangj7@sustech.edu.cn}).}
\maketitle

% REQUIRED
\begin{abstract}

There exists a well defined energy for classical phase-field equations under which the dissipation law is satisfied, i.e., the energy is non-increasing with respect to time. However, it is not clear how to extend the energy definition to time-fractional phase-field equations so that the corresponding dissipation law is still satisfied. 
In this work, we will try to settle this problem for phase-field equations with Caputo time-fractional derivative, by defining a nonlocal energy as an averaging of the classical energy with a time-dependent weight function.
As the governing equation exhibits both nonlocal and nonlinear behavior, the dissipation analysis is challenging. 
To deal with this, we propose a new theorem on judging the positive definiteness of a  symmetric function, that is derived from a special Cholesky decomposition.
Then, the nonlocal energy is proved to be dissipative under a simple restriction of the weight function.
Within the same framework, the time fractional derivative of classical energy for time-fractional phase-field models can be proved to be always nonpositive.

\end{abstract}

{\bf Keywords.} phase-field equation, energy dissipation, Caputo fractional derivative, Allen--Cahn equations, Cahn--Hilliard equations, positive definite kernel

{\bf AMS: }
65M06, 65M12, 74A50

\section{Introduction}

A fractional time derivative arises when the characteristic waiting time diverges, which  models situations involving memory, see, e.g.,  \cite{MK2000,Za2002}. 
In recent years, to model memory effects and subdiffusive regimes in applications such as transport theory, viscoelasticity, rheology and non-Markovian stochastic processes, there has been an increasing interest in the study of time-fractional differential equations, i.e., differential equations where the standard time derivative is replaced by a fractional one, typically a Caputo or a Riemann-Liouville derivative.

For the models involved Caputo fractional derivative, Allen, Caffarelli and Vasseur \cite{Allen2016APP} studied the regularity of a time-fractional parabolic problem. Their main result is a De Giorgi-Nash-Moser H{\"o}lder regularity theorem for solutions in a divergence form equation. In a more recent work \cite{Allen2017PorousMF}, they performed regularity study for
porous medium flow with both a fractional potential pressure and fractional time derivative. In \cite{Luchko2017}, Luchko and Yamamot  discussed the maximum principle for a class of time-fractional diffusion equation with the Caputo time-derivative. In \cite{LiuJG2018}, Li, Liu and Wang investigated Cauchy problems for nonlinear time-fractional Keller-Segel equation  with the Caputo time-derivative. Some important properties of the solutions including the nonnegativity preservation, mass conservation and blowup behaviors are established.

On the other hand, for the models involved Riemann-Liouville fractional time derivative,
 Zach \cite{Zacher2013}  investigated the regularity of weak solutions to a class of time fractional diffusion equations and  obtained a De Giorgi-Nash type theorem which gives an interior H{\"o}lder estimate for bounded weak solutions. In  \cite{VZacher2013}, Vergara and Zacher  investigated optimal decay estimates by using energy methods; and in  \cite{VZacher2017}, they studied instability and blowup properties for Riemann-Liouville time-fractional subdiffusion equations. In \cite{LeWM2019}, Le,  McLean and Stynes studied the well-posedness of the solution of the time-fractional Fokker-Planck equation
with general forcing.
%More references can be found in a recent review article of Zacher \cite{Zacher2019a}.

Most of the works mentioned above are of semi-linearity in space. It is noticed that there exists active research on time-fractional problems with spatial nonlinearity, which arises in practical applications. For example, Allen, Caffarelli and Vasseur \cite{Allen2017PorousMF} considered a time-space fractional porous medium equation with Caputo fractional time derivatives and nonlocal diffusion effects. In \cite{Giga2017}, Giga and Namba investigated the well-posedness of Hamilton-Jacobi equations with a Caputo fractional time derivative, with a main purpose of finding a proper notion of viscosity solutions so that the underlying Hamilton-Jacobi equation is well-posed. A further study along this line
 is recently provided by Camilli and Goffi \cite{Camilli2019}. Their study relies on a combination of a gradient bound for the time-fractional Hamilton-Jacobi equation obtained via nonlinear adjoint method and sharp estimates in Sobolev and H{\"o}lder spaces for the corresponding linear problem.

 The Cahn--Hilliard model \cite{cahn1958free} may be the most popular phase-field model whose governing equation is of the form
\begin{equation}
  \label{eq:Cahn--Hilliard}
    \partial_t \phi + \gamma(- \Delta)
    \left( - \varepsilon^2  \Delta \phi + F'(\phi)
    \right ) = 0, \quad x \in \Omega \subset \mathbb{R}^d, ~ 0 < t \le T,
\end{equation}
where $\varepsilon$ is an interface width parameter,
$\gamma$ is the mobility, and $F$ is a double-well potential that is usually taken the form
$F(\phi) = \frac{1}{4}(1 -\phi^2)^2$. The corresponding free energy functional for the Cahn--Hilliard equation (\ref{eq:Cahn--Hilliard}) is defined as
\begin{equation}
  \label{eq:energy0}
  E(\phi):= \int_{\Omega} \Bigl(\frac{\varepsilon^2}2 |\nabla \phi|^2 +  F(\phi)\Bigr)  \,{\rm d}x.
\end{equation}
The Cahn--Hilliard  equation can be viewed as a gradient flow with the energy (\ref{eq:energy0}) in $H^{-1}$. It is well known that with proper boundary conditions the energy functional $E$ decreases in time:
\begin{equation}
  \label{eq:energy-law_II}
 \frac{\rm d}{{\rm d}t} E(\phi) =
  -\int_\Omega \left|\nabla \left( - \varepsilon^2 \Delta \phi
  + F'(\phi) \right)\right|^2 {\rm d}x \leq 0.
\end{equation}
This dissipation law has been used extensively as the nonlinear numerical stability criteria.

The present paper is concerned with time-fractional phase-field equations.
 Without loss of generality, we consider the most representative phase-field models, i.e., the Allen-Chan model \cite{allen1979microscopic} and the Cahn--Hilliard  model \cite{cahn1958free}:
\begin{equation} \label{eq:phase_field}
  \begin{array}{r@{}l}
 	 \left\{
	\begin{aligned}
	&\partial^\alpha_t \phi = \gamma \, \mathcal G \left(-\varepsilon^2 \Delta \phi +  F'(\phi) \right) && \mbox{in } \Omega\times (0,T], \\
	& \phi(x,0) = \phi_0(x)  && \mbox{in } \Omega,
	\end{aligned}
	\right.
  \end{array}
\end{equation}
where $\alpha \in (0,1)$, $\varepsilon>0$ is the interface width parameter, $\gamma > 0$ is the mobility constant, $F$ is a double-well potential functional, and $\mathcal G = -1$ (Allen--Cahn) or $\Delta$ (Cahn--Hilliard) is a nonpositive operator.
Here, the Caputo fractional derivative of $\phi$ is given by
\begin{equation}\label{eq:Caputo}
\partial^\alpha_t \phi(t) \coloneqq \frac{1}{\Gamma(1-\alpha)}\int_0^t \frac{\phi'(s)}{(t-s)^\alpha}\, {\rm d} s, \quad t\in(0,T),
\end{equation}
where $\Gamma(\cdot)$ is the gamma function. For simplicity, a periodic boundary condition is assumed.

The main purpose of this work is to extend the energy definition (\ref{eq:energy0}) from the classical phase-field models to the time-fractional models (\ref{eq:phase_field}), with the requirement that the energy is decreasing with time.
To do this, we consider a weighted energy $E_\omega(t)$ in the following form:
\begin{equation}
\label{eq:newenergy0}
	E_\omega(t)=  \int_0^1 \omega(\theta) E(\theta t) \, {\rm d} \theta, 
\end{equation}
where $\omega(\cdot)\geq 0$ is some weight function satisfying $\int_0^1 \omega(\theta) {\rm d} \theta = 1$ and $E(\theta t)=E(\phi(\cdot,\theta t))$ is the classical energy defined by \eqref{eq:energy0}. 
Note that $E_\omega$ is a nonlocal energy.
We prove that if $\omega(\theta) \theta^{1-\alpha}(1-\theta)^\alpha$ is nonincreasing w.r.t. $\theta$, then
\begin{equation} \label{mainres0}
\frac{\rm d}{{\rm d}t}E_\omega(t) \leq 0, \quad \forall\;0<t <T.
\end{equation}
In fact, the above result can be achieved as soon as we can prove the negativeness of a special integral involving a weakly singular function.
To do this, we introduce a special Cholesky decomposition, which leads to a new way on judging the positive definiteness of a kernel.
Then, we can show that \eqref{mainres0} holds as long as $\omega(\theta)\theta^{1-\alpha}(1-\theta)^\alpha$ is nonincreasing.

Furthermore, another interesting dissipation result can be obtained from similar analysis.
More precisely,  in the spirit of \eqref{eq:Caputo}, we can define the Caputo time-fractional derivative of classical energy in the following sense
\begin{equation}\label{eq:CaputoEnergy}
\partial^\alpha_t E(t) \coloneqq \frac{1}{\Gamma(1-\alpha)}\int_0^t \frac{E'(s)}{(t-s)^\alpha}\, {\rm d} s, \quad t\in(0,T),
\end{equation}
where $E(s)=E(\phi(\cdot,s))$ is given by \eqref{eq:energy0}. 
In this work, we will show that the time-fractional derivative of classical energy \eqref{eq:CaputoEnergy} is always nonpositive, i.e., 
\begin{equation}\label{eq:fractional_dissipation}
\partial_t^\alpha E(t) \leq 0, \quad \forall\; 0<t<T,
\end{equation}
which was observed in previous numerical simulations \cite{du2019time}, but theoretical proof was not provided.

The paper is organized as follows. 
In Section \ref{sect2}, we first introduce a useful lemma relevant to Cholesky decomposition and then give a theorem on judging the positive definiteness of a kernel. 
In Section \ref{sect3}, the main theorem on energy dissipation result \eqref{mainres0} will be established.
In Section \ref{sect4}, we prove that the fractional derivative of classical energy \eqref{eq:energy0} is always nonpositive.  
Some concluding remarks will be provided in the final section.

\section{A result on positive definite kernel}
\label{sect2}

Before introducing the theorem on the positive definite kernel, we propose a useful lemma about a special Cholesky decomposition.

\begin{lemma}[A special Cholesky decomposition]\label{lem:1}
Given an arbitrary symmetric matrix $\mathbf S$ of size $N\times N$ with positive elements. If $\mathbf S$ satisfies the following properties:
\begin{itemize}\label{item:1}
\item[{\rm (P1):}] $\forall\; 1\leq j < i \leq N$, $\left[ \mathbf S \right]_{i-1,j}\geq \left[ \mathbf S \right]_{i, j}$;
\item[{\rm (P2):}] $\forall\; 1 < j \leq i \leq N$, $\left[ \mathbf S \right]_{i, j-1}< \left[ \mathbf S \right]_{i, j}$;
\item[{\rm (P3):}] $\forall \;1< j < i \leq N$, $\left[ \mathbf S \right]_{i-1, j-1} - \left[ \mathbf S \right]_{i, j-1}\leq \left[ \mathbf S \right]_{i-1, j} - \left[ \mathbf S \right]_{i, j}$,
\end{itemize}
then ${\mathbf S}$ is positive definite.
In particular, ${\mathbf S}$ has the following Cholesky decomposition:
\begin{equation}
{\mathbf S} = {\mathbf L} {\mathbf L}^{\rm T},
\end{equation}
where ${\mathbf L}$ is a lower triangular matrix satisfying
\begin{itemize}
\item[{\rm (Q1):}] $\forall 1\leq j\leq i\leq N$, $\left[ \mathbf L \right]_{ij} > 0$;
\item[{\rm (Q2):}] $\forall 1\leq j < i \leq N$, $\left[ \mathbf L \right]_{i-1,j}\geq \left[ \mathbf L \right]_{i, j}$.
\end{itemize}
\end{lemma}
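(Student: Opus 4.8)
The plan is to prove the statement by induction on the size $N$, realising one step of the Cholesky algorithm as the passage to a Schur complement and showing that this passage preserves the entire hypothesis package (positivity together with (P1)--(P3)). The base case $N=1$ is immediate: $\mathbf S=([\mathbf S]_{11})$ with $[\mathbf S]_{11}>0$ is positive definite and $\mathbf L=(\sqrt{[\mathbf S]_{11}})$ satisfies (Q1) with (Q2) vacuous. For the inductive step I would peel off the first row and column, using $[\mathbf S]_{11}>0$ as the first pivot: the first column of $\mathbf L$ is $[\mathbf L]_{i1}=[\mathbf S]_{i1}/\sqrt{[\mathbf S]_{11}}$, which is positive (all entries of $\mathbf S$ are positive, giving the $j=1$ case of (Q1)) and nonincreasing in $i$ by (P1) (the $j=1$ case of (Q2)). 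The remaining factorisation is governed by the Schur complement $\mathbf S'$, indexed by $2\le i,j\le N$, with entries
\begin{equation}
[\mathbf S']_{ij}=[\mathbf S]_{ij}-\frac{[\mathbf S]_{i1}[\mathbf S]_{j1}}{[\mathbf S]_{11}}.
\end{equation}
It suffices to check that $\mathbf S'$ is again symmetric with positive entries and satisfies (P1)--(P3), for then the inductive hypothesis supplies a Cholesky factor of $\mathbf S'$ obeying (Q1)--(Q2), and appending it to the first column produces $\mathbf L$ with the claimed structure; positive pivots throughout give positive definiteness of $\mathbf S$.

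The heart of the argument is verifying that $\mathbf S'$ inherits (P1)--(P3), and here the convenient quantity is the downward difference $D_{i,j}:=[\mathbf S]_{i-1,j}-[\mathbf S]_{i,j}$, which by (P1) is nonnegative and by (P3) is nondecreasing in $j$. A direct computation gives the key identity
\begin{equation}
[\mathbf S']_{i-1,j}-[\mathbf S']_{i,j}=D_{i,j}-\frac{[\mathbf S]_{j1}}{[\mathbf S]_{11}}\,D_{i,1}.
\end{equation}
Since $[\mathbf S]_{j1}\le[\mathbf S]_{11}$ (column $1$ is nonincreasing downward) and $D_{i,1}\le D_{i,j}$ (the differences are nondecreasing across columns), the right-hand side is at least $D_{i,j}-D_{i,1}\ge 0$, which is (P1) for $\mathbf S'$. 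Differencing this identity once more in $j$ and using (P3) for $D$ together with $[\mathbf S]_{j1}\le[\mathbf S]_{j-1,1}$ yields (P3) for $\mathbf S'$; the analogous computation for the row difference $[\mathbf S']_{i,j}-[\mathbf S']_{i,j-1}$, combining the strict increase from (P2) with the nonincrease of column $1$, gives the strict inequality (P2) for $\mathbf S'$.

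The step I expect to be the main obstacle — and the one that actually uses the full strength of the hypotheses — is positivity of the entries of $\mathbf S'$, since nothing a priori prevents $[\mathbf S]_{ij}[\mathbf S]_{11}<[\mathbf S]_{i1}[\mathbf S]_{j1}$. I would handle the diagonal directly, noting that $[\mathbf S]_{i1}\le[\mathbf S]_{11}$ and $[\mathbf S]_{i1}<[\mathbf S]_{ii}$ force $[\mathbf S']_{ii}=[\mathbf S]_{ii}-[\mathbf S]_{i1}^2/[\mathbf S]_{11}>0$. For the off-diagonal entries I would not argue each one separately but instead invoke (P2) for $\mathbf S'$ (already established), which makes each row of $\mathbf S'$ strictly increasing up to its diagonal; it then suffices to bound the leftmost entry $[\mathbf S']_{i,2}$ from below, and the two-factor estimate $[\mathbf S]_{i2}[\mathbf S]_{11}>[\mathbf S]_{i1}[\mathbf S]_{11}\ge[\mathbf S]_{i1}[\mathbf S]_{21}$ shows $[\mathbf S']_{i,2}>0$. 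Symmetry extends positivity to the whole matrix, closing the induction. Finally, reading off $[\mathbf L]_{ij}=[\mathbf S^{(j)}]_{ij}/\sqrt{[\mathbf S^{(j)}]_{jj}}$ from the successive Schur complements $\mathbf S^{(j)}$, property (Q1) is precisely the positivity of their entries and (Q2) is exactly (P1) applied to $\mathbf S^{(j)}$, so both follow at once from what the induction preserves.
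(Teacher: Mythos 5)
Your proof is correct, and it takes a genuinely different route from the paper's. The paper runs an induction on the \emph{leading principal submatrices} $\mathbf S_n$: it keeps $\mathbf S$ fixed, writes $\mathbf S_n$ in bordered form, solves the triangular system $\mathbf L_{n-1}\mathbf l=\mathbf b$ for the new row of the Cholesky factor, and proves $0<l_{n,j}\le l_{n-1,j}$ by an inner induction on $j$, extracting the signs from differences of consecutive equations via (P1)--(P3); the last pivot is controlled by $l_{n,n}^2\ge[\mathbf S]_{n,n}-[\mathbf S]_{n,n-1}>0$. You instead use the outer-product form of Cholesky and show that the hypothesis class (symmetric, positive entries, (P1)--(P3)) is \emph{closed under Schur complementation}, so a single elimination step reduces $N$ to $N-1$. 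I checked your key identity
\begin{equation*}
[\mathbf S']_{i-1,j}-[\mathbf S']_{i,j}=D_{i,j}-\frac{[\mathbf S]_{j1}}{[\mathbf S]_{11}}D_{i,1},
\end{equation*}
and the inheritance of (P1), (P2), (P3) and entrywise positivity all go through exactly as you describe: the monotonicity of column $1$ from (P1), the chain $D_{i,1}\le D_{i,j}$ from (P3), and the reduction of off-diagonal positivity to $[\mathbf S']_{i,2}>0$ via the already-established (P2) for $\mathbf S'$ are all legitimate, and (Q1)--(Q2) then read off from the successive Schur complements as you say. What your route buys is a cleaner, reusable structural statement (closure of the matrix class under Schur complements) with explicit one-line identities in place of the paper's two nested inductions; what the paper's route buys is a direct, constructive description of the rows of $\mathbf L$ and of the monotonicity (Q2), which mirrors the kernel-level statement the authors need later. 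Both arguments use the full strength of (P1)--(P3), and both conclude positive definiteness from the positivity of the pivots.
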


\begin{proof}
Let $\mathbf S_n$ be the $n$th principal submatrix of $\mathbf S$ of size $n\times n$ with $n\leq N$.
We will give a proof by induction on $n$. 

{\it \underline{Initial case:}}
First, we need to check the case of $\mathbf S_2$.
Obviously, we have the following Cholesky decomposition:
\begin{equation}
\mathbf S_{2} = \left[\begin{array}{cc} {[\mathbf S]}_{11} & {[\mathbf S]}_{12} \\ 
{[\mathbf S]}_{12} & {[\mathbf S]}_{22} \end{array}\right] = 
\left[\begin{array}{cc} \sqrt{{[\mathbf S]}_{11}} & 0 \\ 
\frac{{[\mathbf S]}_{12}}{\sqrt{{[\mathbf S]}_{11}}} & \sqrt{ {[\mathbf S]}_{22} - \frac{{[\mathbf S]}_{12}^2}{{[\mathbf S]}_{11}}}  \end{array}\right]
\left[\begin{array}{cc} \sqrt{{[\mathbf S]}_{11}} & \frac{{[\mathbf S]}_{12}}{\sqrt{{[\mathbf S]}_{11}}} \\ 
0 & \sqrt{ {[\mathbf S]}_{22} - \frac{{[\mathbf S]}_{12}^2}{{[\mathbf S]}_{11}}} \end{array}\right],
\end{equation}
where ${[\mathbf S]}_{12} \leq {[\mathbf S]}_{11} $ and ${[\mathbf S]}_{12}< {[\mathbf S]}_{22}$. 
It is easy to find that the lower triangular matrix in the above decomposition satisfies the properties (Q1) and (Q2). 

{\it \underline{Inductive step:}}
Assume that Lemma \ref{lem:1} is always true until $\mathbf S_{n-1}$ which can be decomposed as $\mathbf L_{n-1} \mathbf L_{n-1}^{\rm T}$.
We will show that $\mathbf S_n$ can still be decomposed as $\mathbf L_n \mathbf L_n^{\rm T}$, with the lower triangular matrix $\mathbf L_n$ satisfying the properties (Q1) and (Q2).

We split $\mathbf S_n$ as follows:
\begin{equation}\label{eq:Bn}
\mathbf S_{n} = \left[\begin{array}{cc} \mathbf S_{n-1} & \mathbf b \\ \mathbf b^{\rm T} & [\mathbf S]_{n,n} \end{array}\right],
\end{equation}
where the $j$th entry of the column vector $\mathbf b$ is $\left[\mathbf b\right]_j  =  [\mathbf S]_{n,j}, 1\le j\le n-1$. We need to find an ${\mathbf L_n}$ such that $\mathbf L_n \mathbf L_n^{\rm T}= \mathbf S_n$. Assume ${\mathbf L_n}$ is of the following form
\begin{equation}
\mathbf L_n
=\left[\begin{array}{cc} \mathbf L_{n-1} & \mathbf 0 \\  \mathbf l^{\rm T} & l_{n,n}\end{array}\right]
= \left[\begin{array}{ccccc} l_{1,1} &  &  &  & \\ l_{2,1} & l_{2,2} &  &  & \\ \vdots & \vdots & \ddots &  & \\ l_{n-1,1} & l_{n-2,2} & \cdots & l_{n-1,n-1} & \\ l_{n,1} & l_{n,2} & \cdots & l_{n,n-1} & l_{n,n} \end{array}\right].
\end{equation}
It follows from the splitting form \eqref{eq:Bn} of $\mathbf S_n$ and $\mathbf L_n \mathbf L_n^{\rm T}= \mathbf S_n$ that $\mathbf l^{\rm T} = \left(l_{n,1},\cdots,l_{n,n-1}\right)$ should satisfy
\begin{equation}\label{eq:Ll}
\mathbf L_{n-1} \mathbf l  = \mathbf b
\end{equation}
and $l_{n,n}$ should satisfy
\begin{equation}\label{eq:LN}
\mathbf l^{\rm T} \mathbf l + l_{n,n}^2 = [\mathbf S]_{n,n}.
\end{equation}
We need to prove that the solution $(\mathbf l, l_{n,n})$ to \eqref{eq:Ll} and \eqref{eq:LN} exists and satisfies
\begin{equation}
0< l_{n,j} \leq l_{n-1,j}, \;\;\; 1\leq j\leq n-1; \quad \;
 l_{n,n} > 0.  \label{eq:lnn}
\end{equation}

We now prove the first part of \eqref{eq:lnn} by induction.
When $j=1$, according to \eqref{eq:Ll} and the property (P1)  of $\mathbf S$, we have
\begin{equation}
0 < l_{n,1} = \frac{[\mathbf S]_{n,1}}{l_{1,1}} \le \frac{[\mathbf S]_{n-1,1}}{l_{1,1}}= l_{n-1,1} ,
\end{equation}
meaning that the first part of \eqref{eq:lnn} is true for $j = 1$.
Assume that the first part of \eqref{eq:lnn} holds for any $1\leq j \leq m$ with $1\leq m< n-1$,
we want to prove that it is also true for $j=m+1$, i.e.,
\begin{equation}\label{eq:lnm1}
0< l_{n,m+1} \leq l_{n-1,m+1}.
\end{equation}
In fact, from \eqref{eq:Ll}, we know that
\begin{eqnarray}
&& [\mathbf S]_{n,m} = \sum_{j=1}^m l_{n,j} \, l_{m,j}, \label{eq:lem_chol1}
\\
&& [\mathbf S]_{n,m+1} = \sum_{j=1}^{m+1} l_{n,j} \, l_{m+1,j} . \label{eq:lem_chol2}
\end{eqnarray}
Subtracting \eqref{eq:lem_chol1} from \eqref{eq:lem_chol2}, according to the property (P2) of $\mathbf S$, we have
\[
0 <  [\mathbf S]_{n,m+1} -[\mathbf S]_{n,m}  \leq \sum_{j=1}^m l_{n,j} \, (l_{m+1,j}-l_{m,j}) + l_{n,m+1}\, l_{m+1,m+1}.
\]
Since $l_{m+1,j}-l_{m,j} \leq 0$, $\forall 1\leq j\leq m$ and $l_{m+1,m+1}> 0$, we deduce from the above inequality that
\begin{equation}\label{eq:lm1_0}
l_{n,m+1}>0.
\end{equation}
%The left-hand-side inequality of \eqref{eq:lnm1} is true.
Similar to \eqref{eq:lem_chol1} and \eqref{eq:lem_chol2}, we also have
\begin{eqnarray}
&& [\mathbf S]_{n-1,m} = \sum_{j=1}^m l_{n-1,j} \, l_{m,j}, \label{eq:lem_chol3} \\
&& [\mathbf S]_{n-1,m+1} = \sum_{j=1}^{m+1} l_{n-1,j} \, l_{m+1,j} . \label{eq:lem_chol4}
\end{eqnarray}
Subtracting \eqref{eq:lem_chol1} from \eqref{eq:lem_chol3}, we obtain
\begin{equation}\label{eq:Bn1m}
[\mathbf S]_{n-1,m} - [\mathbf S]_{n,m} =  \sum_{j=1}^m (l_{n-1,j}-l_{n,j}) \, l_{m,j},
\end{equation}
and subtracting  \eqref{eq:lem_chol2} from \eqref{eq:lem_chol4}, we obtain
\begin{equation}\label{eq:Bn1m1}
[\mathbf S]_{n-1,m+1} - [\mathbf S]_{n,m+1} =  \sum_{j=1}^{m+1} (l_{n-1,j}-l_{n,j}) \, l_{m+1,j}.
\end{equation}
Combining \eqref{eq:Bn1m}, \eqref{eq:Bn1m1}, and the property (P3) of $\mathbf S$, we then have
\[
  \begin{array}{r@{}l}
	\begin{aligned}
	0
	& \leq \left([\mathbf S]_{n-1,m+1} - [\mathbf S]_{n,m+1} \right) - \left([\mathbf S]_{n-1,m} - [\mathbf S]_{n,m} \right) \\
 	& = \sum_{j=1}^{m} (l_{n-1,j}-l_{n,j}) \, (l_{m+1,j}-l_{m,j}) + (l_{n-1,m+1}-l_{n,m+1}) \, l_{m+1,m+1}
 	\end{aligned}.
  \end{array}
\]
Since $l_{n-1,j}-l_{n,j} \geq 0$, $l_{m+1,j}-l_{m,j}\leq 0$, $\forall 1\leq j\leq m$, and $ l_{m+1,m+1}>0$,  we then obtain from the above inequality that
\begin{equation}
l_{n,m+1}  \leq l_{n-1,m+1}.
\end{equation}
Combining this inequality with \eqref{eq:lm1_0}, we obtain that \eqref{eq:lnm1} is true where $j= m+1$.
By induction, we conclude that the first part of \eqref{eq:lnn} holds for any $1\leq j\leq n-1$.

We now turn to prove the second part of \eqref{eq:lnn}, i.e., $l_{n,n} >0$. It follows from \eqref{eq:Ll} and the first part of \eqref{eq:lnn}, we have
\begin{equation}\label{eq:Snn1}
[\mathbf S]_{n,n-1}  = \sum_{j=1}^{n-1} l_{n,j} l_{n-1,j} \ge  \sum_{j=1}^{n-1} l_{n,j}^2
\end{equation}
and using \eqref{eq:LN} gives
\[
[\mathbf S]_{n,n} = \sum_{j=1}^{n} l_{n,j}^2.
\]
This, together with \eqref{eq:Snn1}, gives
\begin{equation}
 l_{n,n}^2 \geq [\mathbf S]_{n,n} - [\mathbf S]_{n,n-1} > 0,
\end{equation}
where the property (P2) is used.
This implies that $l_{n,n}$ is a real number and we can take $l_{n,n}>0$.

In summary, we have proved that $\mathbf L_n$ is computable and satisfies \eqref{eq:lnn}.
Therefore, $\mathbf L_n$ satisfies the properties (Q1) and (Q2) in the lemma and the principal submatrix $\mathbf S_n$ is then positive definite.

{\it \underline{Conclusion:}}
By induction, we conclude that the lemma holds for the full matrix $\mathbf S$ of size $N\times N$ with $N\in \mathbb N_+$.
\end{proof}

From the kernel point of view, the special Cholesky decomposition provides a new way on judging if a symmetric positive function is a positive definite kernel. 
We state and prove the related theorem below. 

\begin{theorem}\label{thm:kernel}
Given a symmetric function $\kappa(x,y)> 0$ defined on $\mathbb R^2$. 
If $\kappa(x,y)$ satisfies
\begin{itemize}
\item $\partial_x \kappa(x,y)\leq 0,~\forall x > y$;
\item $\partial_y \kappa(x,y) > 0,~\forall x > y$;
\item $\partial_{xy} \kappa(x,y) \leq 0,~\forall x > y$,
\end{itemize}
then $\kappa(x,y)$ is a positive definite kernel. 
\end{theorem}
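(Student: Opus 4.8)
The plan is to reduce the kernel statement to the matrix criterion of Lemma \ref{lem:1}. Recall that $\kappa$ being a positive definite kernel means precisely that for every finite collection of distinct points $x_1,\dots,x_N$ the Gram-type matrix $[\kappa(x_i,x_j)]$ is positive definite. So I would fix arbitrary distinct points; since relabeling them conjugates the matrix by a permutation and hence preserves definiteness, I may assume the ordering $x_1<x_2<\dots<x_N$. Setting $[\mathbf S]_{ij}=\kappa(x_i,x_j)$, symmetry of $\kappa$ makes $\mathbf S$ symmetric and the hypothesis $\kappa>0$ makes all its entries positive, so the structural assumptions of Lemma \ref{lem:1} are met. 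It then suffices to verify the three monotonicity properties (P1)--(P3), after which the lemma delivers positive definiteness.

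For (P1) and (P2) I would integrate the first-order sign conditions along axis-parallel segments that stay inside the region $\{x>y\}$. With $j<i$, the two entries in (P1) differ by $\int_{x_{i-1}}^{x_i}\partial_x\kappa(x,x_j)\,dx$; on the interior of this segment one has $x>x_{i-1}\ge x_j=y$, so $\partial_x\kappa\le 0$ gives $\kappa(x_i,x_j)\le\kappa(x_{i-1},x_j)$, which is exactly (P1). Symmetrically, for $1<j\le i$ the difference $[\mathbf S]_{i,j}-[\mathbf S]_{i,j-1}$ equals $\int_{x_{j-1}}^{x_j}\partial_y\kappa(x_i,y)\,dy$, and there $y<x_j\le x_i$, so the \emph{strict} sign $\partial_y\kappa>0$ yields the strict inequality required in (P2).

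The main step, and the one I expect to be the genuine obstacle, is (P3), which is a discrete mixed second difference. I would represent it exactly as a double integral of the mixed partial:
\[
\bigl([\mathbf S]_{i-1,j}-[\mathbf S]_{i,j}\bigr)-\bigl([\mathbf S]_{i-1,j-1}-[\mathbf S]_{i,j-1}\bigr)=-\int_{x_{i-1}}^{x_i}\!\!\int_{x_{j-1}}^{x_j}\partial_{xy}\kappa(x,y)\,dy\,dx.
\]
Over the rectangle $[x_{i-1},x_i]\times[x_{j-1},x_j]$ every interior point satisfies $x>x_{i-1}\ge x_j> y$; here the hypothesis $j<i$ (hence $i-1\ge j$) is exactly what keeps the rectangle strictly inside $\{x>y\}$, so $\partial_{xy}\kappa\le 0$ forces the double integral to be nonpositive and its negative nonnegative, which is precisely (P3). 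The delicate points are verifying that this rectangle never crosses the diagonal and justifying the integral representation itself, which presupposes enough regularity, say $\kappa\in C^2$ on $\{x>y\}$ with $\partial_{xy}\kappa=\partial_{yx}\kappa$, together with continuity up to the diagonal.

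Having checked (P1)--(P3), Lemma \ref{lem:1} guarantees that $\mathbf S$ is positive definite. Since the finite point set was arbitrary, this shows $\kappa$ is a positive definite kernel, completing the argument. I anticipate no difficulty in the reduction or in (P1)--(P2); the work concentrates entirely in the rectangle/diagonal bookkeeping for (P3).
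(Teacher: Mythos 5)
Your proposal is correct and follows essentially the same route as the paper: order the points, form the matrix $[\kappa(x_i,x_j)]$, and verify (P1)--(P3) of Lemma \ref{lem:1} by integrating the sign conditions, with the mixed second difference written as a double integral of $\partial_{xy}\kappa$ over a rectangle contained in $\{x>y\}$. Your explicit check that the rectangle stays off the diagonal is a point the paper leaves implicit, but the argument is the same.
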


\begin{proof}
Take an arbitrary sequence of points $x_1,x_2,\cdots,x_N\in \mathbb R$, $N\in \mathbb N_+$.
Without loss of generalization, we assume that $x_1<\ldots<x_N$. 
Then, $\forall c_1,\ldots,c_N\in \mathbb R$, we want to prove
\begin{equation}\label{ineq:PD}
\sum_{i=1}^N\sum_{j=1}^N c_i c_j \kappa(x_i,x_j) \geq 0.
\end{equation}
Let $\mathbf K = \left[\kappa(x_i,x_j)\right]_{N\times N}$ be the symmetric matrix corresponding to the left-hand side of the above inequality. 
From the three conditions of $\kappa$ in this theorem, it is not difficult to verify that $\mathbf K$ satisfies all three properties in Lemma \ref{lem:1}.
In particular, straight computation gives: $\forall i<j$,
\begin{equation}
  \begin{array}{r@{}l}
	\begin{aligned}
	& \left( \left[ \mathbf K \right]_{i-1, j-1} - \left[ \mathbf K \right]_{i, j-1} \right) - \left( \left[ \mathbf K \right]_{i-1, j} - \left[ \mathbf K \right]_{i, j} \right) \\
	& = \kappa(x_{i-1},x_{j-1}) - \kappa(x_{i},x_{j-1}) - \kappa(x_{i-1},x_j) + \kappa(x_i,x_j) \\
	& = \int_{x_{j-1}}^{x_j} \int_{x_{i-1}}^{x_i} \partial_{xy} \kappa(x,y) \,{\rm d}x\,{\rm d}y
	\leq 0,
 	\end{aligned}
  \end{array}
\end{equation}
that is the third property in Lemma \ref{lem:1}.
Therefore, $\mathbf K$ is a positive definite matrix and the inequality \eqref{ineq:PD} always holds, meaning that $\kappa(x,y)$ is a positive definite kernel.
\end{proof}

\begin{remark}\label{rmk:kernel}
If $\kappa(x,y)$ is a positive definite kernel, then $\kappa(y,x)$ is also a positive definite kernel.
This indicates that if  a positive symmetric function $\kappa$ satisfies $\partial_x \kappa<0$, $\partial_y\kappa \geq 0$, and $\partial_{xy} \kappa\leq 0$ for all $x>y$, then $\kappa$ is a positive definite kernel as in Theorem \ref{thm:kernel}. 
\end{remark}

\begin{remark}\label{rmk:kernel2}
The well-known Abel kernel $e^{-\left|x-y\right|}$ satisfies the three properties in Theorem \ref{thm:kernel} and is consequently a positive definite kernel.
\end{remark}

\section{Dissipation-preserving energy}
\label{sect3}
In this section, we shall construct a dissipation-preserving energy based on the result in Theorem \ref{thm:kernel}.
Consider the classical energy functional for the time-fractional Allen--Cahn or Cahn--Hilliard equation \eqref{eq:phase_field}:
\begin{equation}\label{eq:energy}
E(t) = \int_\Omega \left(\frac{\varepsilon^2} 2 \left| \nabla \phi \right|^2 + F(\phi) \right) \, {\rm d} x.
\end{equation}
Straightforward computation of its derivative with respect to time gives
\begin{equation}\label{eq:ed_acch}
E'(t)= \int_\Omega  \partial_t \phi  \left( - \varepsilon^2 \Delta \phi +  F'(\phi)\right) {\rm d}x
= \frac 1 \gamma \int_\Omega \partial_t \phi\left({\mathcal G^{-1}}\partial^\alpha_t \phi \right) {\rm d}x,
\end{equation}
where $\mathcal G^{-1}$ is the inverse of $\mathcal G$.
%Similarly, the classical energy of the time-fractional MBE equation \eqref{eq:phase_field_MBE} is defined by
%\begin{equation}\label{eq:energy_mbe}
%E_{\rm m} (\phi) = \int_\Omega \left(\frac{\varepsilon^2} 2 \left| \Delta \phi \right|^2 + F_{\rm m}(\phi) \right) \, {\rm d} x,
%\end{equation}
%and the derivative of this energy is
%\begin{equation}\label{eq:ed_mbe}
%  \begin{array}{r@{}l}
%	\begin{aligned}
%	 E'_{\rm m}(t)
%	&  = -  \frac 1 \gamma \int_\Omega \frac{\partial \phi }{\partial t}  \frac{\partial^\alpha \phi}{\partial t^\alpha}   \, {\rm d} x.
%	\end{aligned}
 % \end{array}
%\end{equation}
%In the particular case of $\alpha = 1$, the time fractional derivative becomes the first-order time derivative and it is easy to verify that $E'(t)\leq 0$ and $E_{\rm m}'(t)\leq 0$ for all $t>0$. However, in the general case of $\alpha \in (0,1)$, the theoretical proof is still an open question. We address this issue from a different point of view by redefining the energy in \eqref{eq:energy} and \eqref{eq:energy_mbe}.
It is still a challenge to prove $E'(t)\leq 0$ despite that numerous numerical tests have verified this. 
We remark that Tang et al. demonstrated in \cite{TangYZ19} that the energies associated with the time-fractional problems are bounded above by the initial energy, i.e., 
\begin{equation}
\label{eq:tyz}
E(t)\le E(0),\quad \text{for all} \quad t>0.
\end{equation}

To preserve the dissipation law, we consider a weighted energy $E_\omega(t)$ in the form of
\begin{equation}
\label{eq:newenergy}
	E_\omega(t)=  \int_0^1 \omega(\theta) E(\theta t) \, {\rm d} \theta, 
\end{equation}
where $\omega(\cdot)\geq 0$ is some weight function satisfying $\int_0^1 \omega(\theta) {\rm d} \theta = 1$.
It is then followed from (\ref{eq:newenergy}) that
\begin{equation}
 E_\omega (t) \le \int_0^1 \omega(\theta) E(0) \, {\rm d} s
 = E(0), \quad \forall \; t>0.
 \end{equation}
This indicates that $E_\omega$ is also bounded by the initial energy.
Further, it follows from \eqref{eq:tyz} and \eqref{eq:newenergy} that
\begin{eqnarray}
	 E'_\omega(t) =  \int_0^1  \omega(\theta) \theta E'(\theta t) \, {\rm d} \theta.
    \label{eq:Ea_der0}
\end{eqnarray}
Substituting \eqref{eq:ed_acch} into \eqref{eq:Ea_der0} and taking into account the periodic boundary condition, we have
\begin{equation}\label{eq:Ea_der}
  \begin{array}{r@{}l}
	\begin{aligned}
	E_\omega'(t) 
   	 & = - \frac{t^{1-\alpha}}{\gamma \Gamma(1-\alpha)}  \int_\Omega \int_0^1 \int_0^\theta \frac{\omega(\theta) \theta}{(\theta-\eta)^\alpha}   \psi(\theta t)\psi(\eta t) \, {\rm d} \eta \, {\rm d} \theta \, {\rm d} x  \\
	 &  =  - \frac{t^{1-\alpha}}{2\gamma \Gamma(1-\alpha)}  \int_\Omega \int_0^1 \int_0^1 \kappa(\theta,\eta)   \psi(\theta t)\psi(\eta t) \, {\rm d} \eta \, {\rm d} \theta \, {\rm d} x, 
	\end{aligned}
   \end{array}
\end{equation}
where 
\begin{equation}\label{eq:psi}
\psi =  \left\{
  \begin{array}{r@{}l}
	\begin{aligned}
   	 & \phi' && \mbox{Allen--Cahn,} \\
	 & \nabla (-\Delta)^{-1}\phi' && \mbox{Cahn--Hilliard,}
	\end{aligned}
   \end{array}
   \right.
\end{equation}
and 
\begin{equation}\label{eq:kappa}
\kappa(\theta,\eta) =  \left\{
  \begin{array}{r@{}l}
	\begin{aligned}
		& \frac{\omega(\theta) \theta}{(\theta-\eta)^\alpha}  &&  \theta > \eta, \\
		& \frac{\omega(\eta) \eta}{(\eta-\theta)^\alpha} &&  \theta < \eta.
	\end{aligned}
   \end{array}
   \right.
\end{equation}
We assume that the solution $\phi$ is first-order continuously differentiable w.r.t. time. 
As soon as $\kappa(\theta,\eta)$ is a positive definite kernel, the dissipation property of $E_\omega$ will be ensured, i.e., $E'_\omega(t)\leq 0$.
Based on Theorem \ref{thm:kernel}, we state and prove the following theorem on the  dissipation-preserving energy. 

\begin{theorem} \label{main-thm1}
For the Allen--Cahn and Cahn--Hilliard models \eqref{eq:phase_field}, if function $\omega(\theta) \theta^{1-\alpha}(1-\theta)^\alpha$ is nonincreasing w.r.t. $\theta$, then the weighted energy \eqref{eq:newenergy} is dissipative, i.e., $E'_\omega(t) \leq 0,~ \forall t >0$. \end{theorem}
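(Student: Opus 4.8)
The plan is to reduce the claim to the positive definiteness of the kernel $\kappa$ appearing in \eqref{eq:Ea_der}, and then to verify that positive definiteness by first passing to a more convenient variable. By \eqref{eq:Ea_der} the prefactor $-t^{1-\alpha}/(2\gamma\Gamma(1-\alpha))$ is negative, so it suffices to show that for each fixed $x\in\Omega$ the time double integral $\int_0^1\int_0^1 \kappa(\theta,\eta)\Psi(\theta)\Psi(\eta)\,d\theta\,d\eta\ge 0$, where $\Psi(\theta):=\psi(\theta t)$ (treated componentwise in the Cahn--Hilliard case, where the contributions of the finitely many components simply add); integrating over $\Omega$ then gives $E_\omega'(t)\le 0$. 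Writing $w(\theta):=\omega(\theta)\theta$, the symmetric kernel is $\kappa(\theta,\eta)=w(\theta\vee\eta)\,|\theta-\eta|^{-\alpha}$, and the hypothesis, after differentiation, is equivalent to $w'(\theta)\,\theta(1-\theta)\le \alpha\,w(\theta)$. A naive attempt to verify the three conditions of Theorem \ref{thm:kernel} for $\kappa$ directly on $[0,1]^2$ fails: the condition $\partial_\theta\kappa\le 0$ amounts to $w'(\theta)(\theta-\eta)\le \alpha w(\theta)$, whose worst case $\eta\to 0$ demands $w'(\theta)\theta\le \alpha w(\theta)$, which is strictly stronger than what the hypothesis supplies because of the extra factor $1-\theta$. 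This mismatch near $\theta=1$ is precisely the signal for the main idea.

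The key step is the substitution $r=\theta/(1-\theta)$, i.e. $\theta=r/(1+r)$ with $d\theta=(1+r)^{-2}\,dr$, which maps $[0,1]$ onto $[0,\infty)$ and removes the offending endpoint. Under it $\theta-\eta=(r-s)/[(1+r)(1+s)]$, so for $r>s$ the integrand factor $\kappa(\theta(r),\eta(s))\,(1+r)^{-2}(1+s)^{-2}$ equals $G(r)\,(r-s)^{-\alpha}(1+r)^{\alpha-2}(1+s)^{\alpha-2}$, where $G(r):=w(\theta(r))$. Absorbing the single-variable factors into a new test function $\Phi(r):=(1+r)^{\alpha-2}\Psi(\theta(r))$, the quadratic form becomes $\int_0^\infty\int_0^\infty \hat\kappa(r,s)\Phi(r)\Phi(s)\,dr\,ds$ with the much cleaner kernel $\hat\kappa(r,s)=G(r\vee s)\,|r-s|^{-\alpha}$. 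Crucially, a short computation shows the hypothesis transforms into the statement that $G(r)\,r^{-\alpha}$ is nonincreasing, i.e. $G'(r)\,r\le \alpha\,G(r)$; the troublesome factor $1-\theta$ has disappeared together with the upper endpoint.

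I would then verify the three hypotheses of Theorem \ref{thm:kernel} for $\hat\kappa$ on $\{r>s\}$. Positivity is clear; $\partial_s\hat\kappa=\alpha G(r)(r-s)^{-\alpha-1}>0$ is immediate; and both $\partial_r\hat\kappa\le 0$ and $\partial_{rs}\hat\kappa\le 0$ reduce to $G'(r)(r-s)\le \alpha G(r)$ and $G'(r)(r-s)\le(\alpha+1)G(r)$ respectively. These now follow from $G'(r)\,r\le \alpha G(r)$: since $0<r-s<r$, if $G'(r)\ge 0$ then $G'(r)(r-s)\le G'(r)\,r\le \alpha G(r)\le(\alpha+1)G(r)$, while both inequalities are trivial when $G'(r)<0$. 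Hence $\hat\kappa$ is a positive definite kernel by Theorem \ref{thm:kernel}, the transformed quadratic form is nonnegative, and undoing the substitution yields $\int_0^1\int_0^1\kappa\Psi\Psi\ge 0$ and therefore $E_\omega'(t)\le 0$.

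The main obstacle is twofold. Conceptually, the crux is discovering the substitution $r=\theta/(1-\theta)$: one must recognize that the weight $1-\theta$ in the hypothesis is exactly the Jacobian-type factor absorbed by this map, and that Theorem \ref{thm:kernel} becomes applicable only \emph{after} the change of variables. Technically, the kernel $\hat\kappa$ (like $\kappa$) is singular on the diagonal, so Theorem \ref{thm:kernel}, stated for matrices of point values, does not literally apply to the continuous quadratic form. I would bridge this by a standard limiting argument: replace $|r-s|^{-\alpha}$ by $(|r-s|+\varepsilon)^{-\alpha}$, which preserves all three monotonicity conditions and has finite diagonal, apply Theorem \ref{thm:kernel} to get nonnegativity of the regularized form, and let $\varepsilon\to 0$ using integrability of the weak singularity; equivalently, testing against step functions reduces matters to verifying properties (P1)--(P3) of Lemma \ref{lem:1} for the finite cell-averaged kernel.
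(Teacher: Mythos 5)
Your proposal is correct, and it reaches the same destination as the paper by what is, underneath the change of variables, the same factorization. The paper writes $\kappa(\theta,\eta)=\bigl[\omega(\theta)\theta^{1-\alpha}(1-\theta)^\alpha\mu(\theta,\eta)\bigr]\cdot\bigl[(1-\theta)(1-\eta)\bigr]^{-\alpha}$ with $\mu(\theta,\eta)=\theta^\alpha(1-\eta)^\alpha(\theta-\eta)^{-\alpha}$, checks the three conditions of Theorem \ref{thm:kernel} for the first factor, and invokes the fact that a product of positive definite kernels is positive definite for the second (rank-one) factor. Your substitution $r=\theta/(1-\theta)$ turns that first factor into exactly your $\hat\kappa(r,s)=G(r\vee s)|r-s|^{-\alpha}$, and absorbing $(1+r)^{\alpha}(1+s)^{\alpha}$ together with the Jacobian into the test function is the honest way of handling the rank-one factor without appealing to the Schur-product property. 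What your version buys: the derivative computations for $G(r\vee s)|r-s|^{-\alpha}$ under the transformed hypothesis $G'(r)r\le\alpha G(r)$ are noticeably cleaner than the paper's computation of $\partial_{\theta\eta}\mu$; your opening observation that Theorem \ref{thm:kernel} cannot be applied to $\kappa$ directly (the worst case $\eta\to0$ demands the stronger condition that $\omega(\theta)\theta^{1-\alpha}$ be nonincreasing) explains \emph{why} some factorization is needed, which the paper never motivates; and you explicitly address the passage from the pointwise-matrix statement of Theorem \ref{thm:kernel} to the singular integral quadratic form, a step the paper silently omits. One small caveat on that last point: the regularization $(|r-s|+\varepsilon)^{-\alpha}$ does not literally preserve $\partial_r\hat\kappa\le0$ when $G'(r)>0$ and $s<\varepsilon$, since the bound needed becomes $G'(r)(r-s+\varepsilon)\le\alpha G(r)$ and $r-s+\varepsilon$ can exceed $r$; your alternative route via step functions and the cell-averaged matrix, whose entries inherit (P1)--(P3) of Lemma \ref{lem:1} by integrating the pointwise derivative conditions (with the diagonal entries finite because $\alpha<1$), is the one that goes through cleanly.
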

\begin{proof}
When $\theta>\eta$, $\kappa(\theta,\eta)$ given by \eqref{eq:kappa} can be rewritten as
\begin{equation}
\kappa(\theta,\eta) 
= \omega(\theta) \theta^{1-\alpha} (1-\theta)^\alpha \frac{\theta^\alpha(1-\eta)^\alpha}{(\theta-\eta)^\alpha} \frac{1}{(1-\theta)^\alpha (1-\eta)^\alpha}.
\end{equation}
It is trivial to see that $\frac{1}{(1-\theta)^\alpha (1-\eta)^\alpha}$ is a positive definite kernel. 
Further, one can easily verify that
\begin{equation}
\mu(\theta,\eta) = \frac{\theta^\alpha(1-\eta)^\alpha}{(\theta-\eta)^\alpha}, \quad \forall \theta > \eta
\end{equation}
decreases w.r.t $\theta$, while increases w.r.t. $\eta$.
%This means that the first two properties in Theorem \ref{thm:kernel} are satisfied. 
Moreover, straight computation gives
\begin{equation}
  \begin{array}{r@{}l}
	\begin{aligned}
	\partial_{\theta\eta} \mu(\theta,\eta)
	& = \partial_\eta\left[ \alpha(1-\eta)^\alpha \left( \theta^{\alpha-1} (\theta-\eta)^{-\alpha} - \theta^\alpha(\theta-\eta)^{-\alpha-1}\right) \right] \\
	& = \alpha^2(1-\eta)^{\alpha-1}  \theta^{\alpha-1} (\theta-\eta)^{-\alpha-1} \eta
	-\alpha^2 (1-\eta)^\alpha \theta^{\alpha-1}(\theta-\eta)^{-\alpha-2}(\alpha\theta+\eta) \\
	& = -\alpha^2 (1-\eta)^{\alpha-1}\theta^{\alpha-1}(\theta-\eta)^{-\alpha-2} \left[ \eta (1-\theta)+\alpha\theta(1-\eta) \right]\\
	& \leq 0.
	\end{aligned}
   \end{array}
\end{equation}
Since $\omega(\theta) \theta^{1-\alpha}(1-\theta)^\alpha$ is nonincreasing, $\omega(\theta) \theta^{1-\alpha}(1-\theta)^\alpha \mu(\theta,\eta)$ satisfies the three conditions in Theorem \ref{thm:kernel}.
Therefore, its symmetric extension is a positive definite kernel.

In summary, $\kappa(\theta,\eta)$ in \eqref{eq:kappa} is the product of two positive definite kernels and itself is consequently a positive kernel.
Therefore, we have $E'_\omega(t)\leq 0$ according to \eqref{eq:Ea_der}.
\end{proof}

\begin{corollary}
Consider the following two cases:
\begin{itemize}
\item[(i)] 
\begin{equation}
\omega(\theta) = \frac{1}{B(\alpha,1-\alpha)\theta^{1-\alpha} (1-\theta)^\alpha},
\end{equation}
where $B(\cdot,\cdot)$ is the Beta function, and
\item[(ii)]
\begin{equation}
\omega(\theta) = \frac{1}{\alpha\theta^{1-\alpha} },
\end{equation}
\end{itemize}
it can be verified that the weighted energy $E_\omega$ in \eqref{eq:newenergy} is dissipative for both cases.
\end{corollary}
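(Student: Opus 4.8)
The plan is to reduce the corollary to a direct application of Theorem~\ref{main-thm1}. That theorem guarantees $E_\omega'(t)\le 0$ for all $t>0$ as soon as the product $g(\theta):=\omega(\theta)\,\theta^{1-\alpha}(1-\theta)^\alpha$ is nonincreasing on $(0,1)$. Hence for each of the two prescribed weights I only need to form $g$ explicitly and check its monotonicity; no further analysis of the kernel \eqref{eq:kappa} is required, since all the positive-definiteness machinery has already been absorbed into Theorem~\ref{main-thm1}.

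First I would treat case (i). Substituting $\omega(\theta)=\bigl[B(\alpha,1-\alpha)\,\theta^{1-\alpha}(1-\theta)^\alpha\bigr]^{-1}$ into the product causes the factors $\theta^{1-\alpha}(1-\theta)^\alpha$ to cancel exactly, leaving $g(\theta)\equiv 1/B(\alpha,1-\alpha)$, a positive constant. A constant is (weakly) nonincreasing, so the hypothesis of Theorem~\ref{main-thm1} is met and dissipation follows. As a consistency check one may note that $\omega(\theta)=\tfrac{1}{B(\alpha,1-\alpha)}\theta^{\alpha-1}(1-\theta)^{-\alpha}$ is positive on $(0,1)$ and integrates to one, precisely by the definition $B(\alpha,1-\alpha)=\int_0^1\theta^{\alpha-1}(1-\theta)^{-\alpha}\,{\rm d}\theta$; this is the normalized Beta density.

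Next I would treat case (ii). With $\omega(\theta)=\bigl(\alpha\,\theta^{1-\alpha}\bigr)^{-1}$, the $\theta^{1-\alpha}$ factors again cancel and I obtain $g(\theta)=\tfrac{1}{\alpha}(1-\theta)^\alpha$. Since $\alpha\in(0,1)$, the map $\theta\mapsto(1-\theta)^\alpha$ is strictly decreasing on $(0,1)$, so $g$ is nonincreasing and Theorem~\ref{main-thm1} again yields $E_\omega'(t)\le 0$.

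I do not anticipate a genuine obstacle: the corollary is engineered so that each weight renders the critical product $g$ either constant or manifestly monotone, and all the difficulty lives in Theorem~\ref{main-thm1}. The only points needing the slightest care are verifying that the cancellations hold throughout the open interval $(0,1)$ (so that $\omega$ is a well-defined, integrable weight, the $\theta^{1-\alpha}$ and $(1-\theta)^\alpha$ singularities being tamed by $\alpha\in(0,1)$) and recalling the Beta-integral identity invoked in case (i). I would also remark that the dissipation conclusion is scale-invariant in $\omega$, since multiplying $\omega$ by a positive constant leaves both the monotonicity of $g$ and the sign of $E_\omega'(t)$ in \eqref{eq:Ea_der} unchanged.
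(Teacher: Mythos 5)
Your proof is correct and is exactly the verification the paper intends: the corollary follows immediately from Theorem~\ref{main-thm1} once $g(\theta)=\omega(\theta)\,\theta^{1-\alpha}(1-\theta)^\alpha$ is computed explicitly, giving the constant $1/B(\alpha,1-\alpha)$ in case (i) and the decreasing function $(1-\theta)^\alpha/\alpha$ in case (ii); the paper itself offers no written proof beyond ``it can be verified.'' Your closing remark that the sign of $E_\omega'(t)$ is invariant under rescaling $\omega$ is also well taken, since the weight in case (ii) in fact integrates to $1/\alpha^2$ rather than $1$, so it satisfies the normalization convention of \eqref{eq:newenergy} only after rescaling, which does not affect the dissipation conclusion.
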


\section{Fractional derivative of classical energy}\label{sect4}
We have discussed how to construct a weighted energy for the time-fractional phase-field equations,  which preserves the dissipation law, i.e., $E'_\omega(t)\leq 0$ for all $t>0$. 
However, it is still an open question if $E'(t) \leq 0$ holds true. 
We don't have an affirmative answer yet. 
But from another point of view, we can show that the dissipation of classical energy \eqref{eq:energy} holds in the sense of time-fractional derivative.

\begin{theorem}\label{main-thm2}
 For the Allen--Cahn and Cahn--Hilliard models \eqref{eq:phase_field}, the Caputo time-fractional derivative of the classical energy is always nonpositive, i.e., \eqref{eq:fractional_dissipation} holds.
\end{theorem}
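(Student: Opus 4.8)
The plan is to mirror the analysis of Section~\ref{sect3}, reducing the sign of $\partial_t^\alpha E(t)$ to the positive definiteness of an explicit kernel that turns out to be covered by Theorem~\ref{thm:kernel}. First I would rewrite $E'(s)$ in the unified form already implicit in \eqref{eq:ed_acch}: using $-\varepsilon^2\Delta\phi + F'(\phi) = \frac1\gamma\mathcal G^{-1}\partial_s^\alpha\phi$, the definition \eqref{eq:Caputo} of the Caputo derivative, the definition \eqref{eq:psi} of $\psi$, and (for the Cahn--Hilliard case) integration by parts against $(-\Delta)^{-1}$, one obtains
\begin{equation}
E'(s) = -\frac{1}{\gamma\Gamma(1-\alpha)}\int_\Omega\int_0^s \frac{\psi(x,s)\,\psi(x,r)}{(s-r)^\alpha}\,{\rm d}r\,{\rm d}x .
\end{equation}
Substituting this into \eqref{eq:CaputoEnergy} produces a nested fractional integral,
\begin{equation}
\partial_t^\alpha E(t) = -\frac{1}{\gamma\Gamma(1-\alpha)^2}\int_\Omega \int_0^t\int_0^s \frac{\psi(x,s)\,\psi(x,r)}{(t-s)^\alpha (s-r)^\alpha}\,{\rm d}r\,{\rm d}s\,{\rm d}x ,
\end{equation}
so that the claim \eqref{eq:fractional_dissipation} is equivalent to showing that the inner double integral over the triangle $\{0<r<s<t\}$ is nonnegative for each fixed $x$ (the spatial integral then preserves the sign). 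Both weak singularities $(t-s)^{-\alpha}$ and $(s-r)^{-\alpha}$ are integrable since $\alpha\in(0,1)$, so the integral is well defined under the assumed $C^1$-in-time regularity of $\phi$.

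Next I would symmetrize and rescale. Writing $g=\psi(x,\cdot)$ and symmetrizing the triangular integral over the full square $[0,t]^2$ gives
\begin{equation}
\int_0^t\int_0^s \frac{g(s)g(r)}{(t-s)^\alpha (s-r)^\alpha}\,{\rm d}r\,{\rm d}s = \frac12\int_0^t\int_0^t g(s)g(r)\,K(s,r)\,{\rm d}r\,{\rm d}s,\qquad K(s,r)=\frac{1}{(t-\max(s,r))^\alpha|s-r|^\alpha}.
\end{equation}
After the change of variables $s=\theta t$, $r=\eta t$, a positive factor $t^{2-2\alpha}$ comes out and $K$ reduces (up to that constant) to the symmetric kernel
\begin{equation}
\tilde\kappa(\theta,\eta)=\frac{1}{(1-\max(\theta,\eta))^\alpha|\theta-\eta|^\alpha},\qquad \theta,\eta\in(0,1).
\end{equation}
Thus it suffices to prove that $\tilde\kappa$ is a positive definite kernel.

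Finally I would establish positive definiteness of $\tilde\kappa$ exactly as in the proof of Theorem~\ref{main-thm1}. For $\theta>\eta$ one has the factorization
\begin{equation}
\tilde\kappa(\theta,\eta)=\frac{1}{(1-\theta)^\alpha(\theta-\eta)^\alpha}= \theta^{-\alpha}\cdot\frac{\theta^\alpha(1-\eta)^\alpha}{(\theta-\eta)^\alpha}\cdot\frac{1}{(1-\theta)^\alpha(1-\eta)^\alpha},
\end{equation}
which is precisely the kernel \eqref{eq:kappa} of Theorem~\ref{main-thm1} with profile $\omega(\theta)\theta=(1-\theta)^{-\alpha}$; the associated monotone factor is $\omega(\theta)\theta^{1-\alpha}(1-\theta)^\alpha=\theta^{-\alpha}$, which is nonincreasing on $(0,1)$. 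Since the middle factor $\mu(\theta,\eta)$ satisfies the three hypotheses of Theorem~\ref{thm:kernel} (as verified in the proof of Theorem~\ref{main-thm1}), multiplying it by the positive nonincreasing function $\theta^{-\alpha}$ preserves those hypotheses, so $\theta^{-\alpha}\mu$ has a positive definite symmetric extension; the remaining factor $(1-\theta)^{-\alpha}(1-\eta)^{-\alpha}$ is a rank-one positive definite kernel, and the Schur product theorem yields that $\tilde\kappa$ is positive definite. Hence the double integral is nonnegative and \eqref{eq:fractional_dissipation} follows. The step requiring the most care — and the main obstacle — is this reduction: justifying the nested-integral and symmetrization manipulations in the presence of the two weakly singular factors, and observing that although $\theta^{-\alpha}$ corresponds to a non-normalizable weight $\omega$ (so it is not an admissible weighted energy in the sense of Section~\ref{sect3}), the dissipation argument uses only the monotonicity of $\theta^{-\alpha}$ and not the integrability of $\omega$, so Theorem~\ref{thm:kernel} applies verbatim.
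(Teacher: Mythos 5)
Your proposal is correct and follows essentially the same route as the paper: reduce $\partial_t^\alpha E(t)$ to the nested double integral with kernel $(t-s)^{-\alpha}(s-\tau)^{-\alpha}$, symmetrize, factor out the rank-one positive definite piece, and verify the remaining monotone factor against Theorem~\ref{thm:kernel} before invoking the Schur product. The only difference is cosmetic: you rescale to the unit square so as to reuse the $\mu(\theta,\eta)$ computations from Theorem~\ref{main-thm1} with the (non-normalizable) profile $\theta^{-\alpha}$, whereas the paper works directly on $[0,t]^2$ and checks the three derivative conditions for $\mu(s,\tau)=(t-\tau)^\alpha/(s-\tau)^\alpha$ afresh.
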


\begin{proof}
Substituting \eqref{eq:ed_acch} into \eqref{eq:CaputoEnergy} yields
\begin{equation}
  \begin{array}{r@{}l}
	\begin{aligned}
	\partial_t^\alpha E(t)  
	& = - \frac 1 {\gamma\Gamma(1-\alpha)^2} \int_\Omega \int_0^t \int_0^s \frac{\psi(s)\psi(\tau)}{(t-s)^\alpha (s-\tau)^\alpha} \, {\rm d} \tau \, {\rm d} s \, {\rm d} x \\
	& =  - \frac 1 {2\gamma\Gamma(1-\alpha)^2} \int_\Omega \int_0^t \int_0^t \kappa(s,\tau) \psi(s)\psi(\tau) \, {\rm d} \tau \, {\rm d} s \, {\rm d} x,
	\end{aligned}
   \end{array}
\end{equation}
where $\psi$ is given by \eqref{eq:psi} and
\begin{equation}\label{eq:kappa2}
\kappa(s,\tau) =  \left\{
  \begin{array}{r@{}l}
	\begin{aligned}
		& \frac{1}{(t-s)^\alpha(s-\tau)^\alpha}  &&  s > \tau, \\
		& \frac{1}{(t-\tau)^\alpha(\tau-s)^\alpha} &&  s < \tau.
	\end{aligned}
   \end{array}
   \right.
\end{equation}
When  $s>\tau$, we can rewrite
\begin{equation}
\kappa(s,\tau) = \frac{1}{(t-s)^\alpha(t-\tau)^\alpha} \frac{(t-\tau)^\alpha}{(s-\tau)^\alpha}.
\end{equation}
It is trivial to see that $\frac{1}{(t-s)^\alpha(t-\tau)^\alpha}$ is a positive definite kernel.
Further, we can find easily that
\begin{equation}
\mu(s,\tau) = \frac{(t-\tau)^\alpha}{(s-\tau)^\alpha}
\end{equation} 
decreases w.r.t. $s$, while increases w.r.t. $\tau$. 
Straight computation gives 
\begin{equation}
  \begin{array}{r@{}l}
	\begin{aligned}
	\partial_{s\tau}\mu(s,\tau) 
	& = \partial_\tau \left[ - \alpha(t-\tau)^\alpha \left(s-\tau\right)^{-\alpha-1} \right] \\
	& = -\alpha (t-\tau)^{\alpha-1}(s-\tau)^{-\alpha-2} \left[  (t-\tau)+\alpha(t-s) \right]\\
	& \leq 0.
	\end{aligned}
   \end{array}
\end{equation}
According to Theorem \ref{thm:kernel}, the symmetric expansion of $\mu(s,\tau)$ is a positive definite kernel.
Therefore, $\kappa(s,\tau)$ in \eqref{eq:kappa2} is a positive definite kernel.
This means that $\partial_t^\alpha E(t)\leq 0$ for all $t>0$.
\end{proof}

%\section{Tests}
%We present some simple 1D tests to verify our theoretical results. 
%We mention that the numerical analysis is not the key point here and could be studied in future works. 
%
%In the following test, we use the L1 approximation of time fractional derivative \cite{}, the implicit-explicit scheme for time stepping \cite{}, and the finite different method for space discretization.
%
%\subsection{Allen--Cahn model}
%
%\subsection{Cahn--Hilliard model}

\section{Conclusion}

It is known that the historic memory of time-fraction plays a significant role as demonstrated in many numerical simulations, see, e.g., \cite{TangYZ19,WangH2018,Plociniczak2018}. Although the whole evolution process may be slower due to the memory effect, it is still expected that main regularity properties, nonlinear stability and other main features of the relevant phase-field  equations will be preserved. 
The main purpose of this work is along this direction. 
More specifically, we have proposed a new energy $E_\omega$ for the time-fractional phase-field equations, which preserves the dissipation law under a restriction of the weight function.
Moreover, the time-fractional derivative of classical energy is proved to be nonpositive, which has been observed in previous numerical simulations \cite{du2019time}.

We remark that Theorem \ref{thm:kernel} on judging a positive definite kernel is innovative, which is based on the special Cholesky decomposition.
This result is the key ingredient in this article that allows us to analyze the dissipation property of weighted energy and the time-fractional derivative of classical energy. 

%In particular, we are interested in knowing if the direct energy dissipation holds for the energy definition \eqref{eq:free-energy} which is independent of $\alpha$.
%As this property has indeed been confirmed by numerous numerical experiments, see e.g.,  \cite{WangH2018,TangYZ19}, it remains challenging to verify this theoretically.

\bibliography{bibfile}
\bibliographystyle{unsrt}

\end{document}